\documentclass{amsart} 

\usepackage[dvips]{epsfig}
\usepackage{latexsym}
\usepackage{amsfonts,amsmath,amssymb}
\newtheorem{corollary}{Corollary}
\newtheorem{lemma}{Lemma}
\newtheorem{example}{Example}
\newtheorem{theorem}{Theorem}
\newtheorem{proposition}{Proposition}
\newtheorem{question}{Question}
\newtheorem{definition}{Definition}



\def\cal{\mathcal}
\newcommand{\Sh}{{\cal S}_H}
\begin{document}

\title{The Average Number of Block Interchanges Needed to Sort A Permutation
and a recent result of Stanley}
\author[Bona]{Mikl\'os B\'ona${}^1$}
\address{Department of Mathematics\\
  University of Florida\\
  Gainesville, FL 32611-8105}
\email{bona@math.ufl.edu}
\thanks{${}^1$Research supported by the National Science Foundation,
the National Security Agency, and the Howard Hughes Medical Institute.}

\author[Flynn]{Ryan Flynn${}^2$}
\address{Department of Mathematics\\
Penn State University\\
University Park, State College, PA 16802}
\email{flynn@math.psu.edu}
\thanks{${}^2$ 
Research supported by the Howard Hughes Medical Institute.}

\begin{abstract} We use an interesting result of probabilistic flavor
concerning the product of two permutations consisting of one cycle each 
to find an explicit formula for the average number of block interchanges
needed to sort a permutation of length $n$.
\end{abstract}

\maketitle

\section{Introduction}
\subsection{The main definition, and the outline of this paper}
Let $p=p_1p_2\cdots p_n$ be a permutation. A {\em block interchange}
is an operation that interchanges two blocks of consecutive entries
 without
changing the order of entries within each block. The two blocks do not need
to be adjacent. Interchanging the blocks $p_{i}p_{i+1}\cdots p_{i+a}$
and $p_{j}p_{j+1}\cdots p_{j+b}$ with $i+a<p_j$ results in the permutation
\[p_1p_2\cdots p_{i-1}p_{j}p_{j+1}\cdots p_{j+b}p_{i+a+1}
\cdots p_{j-1}p_ip_{i+1}
\cdots p_{i+a}p_{j+b+1}\cdots p_n.\]
For instance, if $p=3417562$, then interchanging the block of the first
two entries with the block of the last three entries results in the 
permutation $5621734$. 

In this paper, we are going to compute the average number of block
 interchanges
to sort a permutation of length $n$.
The methods used in the proof are surprising for several reasons. 
First, our enumeration problem will lead us to an interesting question
on the symmetric group that is very easy to ask and that is of probabilistic
flavor. Second, this question then turns out to be surprisingly difficult to
answer-- the conjectured answer of one of the authors has only recently been
proved by Richard Stanley \cite{stanley}, whose proof was not elementary. 

\subsection{Earlier Results and Further Definitions}

The first significant result  on the topic of sorting by block 
interchanges is by D. A. Christie \cite{christie}, who provided a
direct way of determining the number of block interchanges necessary
to sort any given permutation $p$. The following definition was crucial to
his results. 
\begin{definition} 
 The {\em cycle graph} $G(p)$ of the permutation $p=p_1p_2\cdots p_n$
 is a directed graph on
vertex set $\{0,1,\cdots ,n\}$ and  $2n$ edges that are colored either
black or gray as follows. Set $p_0=0$.
\begin{enumerate}
\item For $0\leq i\leq n$, there is a black edge from $p_i$ to $p_{i-1}$, 
where the indices are to be read modulo $n+1$, and
\item For $0\leq i\leq n$, there is a gray edge from $i$ to $i+1$, where
the indices are to be read modulo $n+1$.
\end{enumerate}
\end{definition}

See Figure \ref{christiegraphs} for two examples.

\begin{figure}[ht]
 \begin{center}
  \epsfig{file=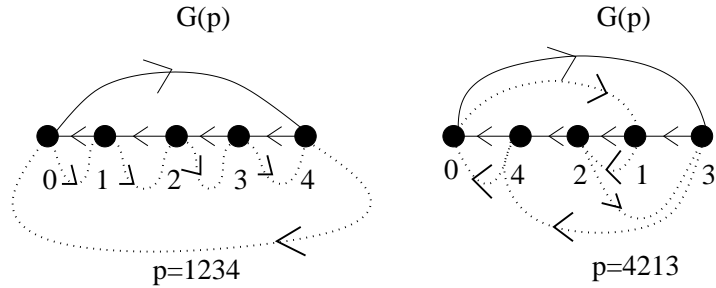}
  \label{christiegraphs}
\caption{The graphs $G(p)$ for $p=1234$ and $p=4213$.}
 \end{center}
\end{figure}

It is straightforward to show that $G(p)$ has a unique decomposition into
edge-disjoint directed cycles in which the colors of the edges alternate. 
Let $c(G(p))$ be the number of directed
cycles in this decomposition of $G(p)$.  
The main enumerative result of \cite{christie} is the following 
formula. In the rest of this paper,  permutations of length $n$ will be
called $n$-permutations, for shortness.

\begin{theorem} \label{christie}
The number of block interchanges needed to sort the
$n$-permutation $p$ is $\frac{n+1-c(G(p))}{2}$.
\end{theorem}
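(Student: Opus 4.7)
The plan is to prove the formula by matching an upper and a lower bound on the minimum number of block interchanges, both expressed through the invariant $c(G(p))$. I would establish two claims: first, that a single block interchange can change $c(G(p))$ by at most $2$; second, that whenever $p \ne \mathrm{id}$, some block interchange increases $c(G(p))$ by exactly $2$. Combined with the base case $c(G(\mathrm{id})) = n+1$ (verified by direct inspection: the alternating decomposition of $G(\mathrm{id})$ consists of the $n+1$ two-cycles, one through each consecutive pair $\{i,i+1\}$ mod $n+1$), these two claims imply that the minimum number of block interchanges is exactly $\frac{n+1 - c(G(p))}{2}$.

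For the upper bound on the change per step, I would exploit the locality of the alternating cycle decomposition. A block interchange alters only four black edges of $G(p)$---those at the boundaries of the two interchanged blocks (i.e.\ the edges entering and leaving each block)---while all gray edges and all other black edges are preserved. Since the alternating cycle decomposition is determined by pairing incoming and outgoing edges of opposite color at each vertex, only cycles passing through these four edges are affected, and a case analysis of how four swapped black edges can reshuffle alternating cycles shows that the number of cycles can change by at most $2$.

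The main obstacle is the second claim: constructing, for each non-identity $p$, a block interchange that actually realizes a $+2$ gain. The natural strategy is to use the fact that $c(G(p)) < n+1$ implies the existence of an alternating cycle of length $\ge 4$; on such a cycle one hopes to identify four black edges whose ``rewiring'' splits the cycle into three alternating cycles, boosting $c$ by exactly $2$. The subtle part is ensuring that this rewiring corresponds to a \emph{legitimate} block interchange---i.e., that the four chosen black edges are the boundary edges of two disjoint blocks of consecutive positions in $p$. This is a genuine constraint, not an arbitrary rewiring, and the heart of the argument lies in showing that such a choice is always available. I would proceed by picking a long cycle, ordering its four selected vertices along the positions of $p$, and then verifying that the resulting block interchange has the desired effect on the cycle structure; checking all the configurations that arise is the combinatorially delicate step.
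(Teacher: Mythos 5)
First, a point of reference: the paper does not prove this statement at all --- it is Christie's theorem, imported by citation from \cite{christie} --- so your proposal is being judged on its own terms rather than against a proof in the text. Your architecture is exactly the right one (and is Christie's): verify $c(G(\mathrm{id}))=n+1$, show each block interchange changes $c(G(p))$ by at most $2$ (giving the lower bound $\frac{n+1-c(G(p))}{2}$), and show that every non-identity permutation admits a block interchange gaining exactly $2$ (giving a matching algorithmic upper bound). The base case is correct, and the lower-bound half is essentially routine: a block interchange replaces exactly four black edges (three if the blocks are adjacent) and leaves all gray edges fixed, so only the alternating cycles through those edges can be affected. One remark in your favor: the ``legitimacy'' worry you flag is actually a non-issue, since any four distinct black edges correspond to four distinct cut points $k_1<k_2<k_3<k_4$ in the arrangement $0,p_1,\dots,p_n$, and these always bound a legal interchange of the blocks $p_{k_1+1}\cdots p_{k_2}$ and $p_{k_3+1}\cdots p_{k_4}$.

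The genuine gap is the second claim, which is the entire content of the hard direction, and your sketch of it would fail as stated. You propose to take an alternating cycle of length $\geq 4$ and rewire four of its black edges to split it into three cycles. But an alternating cycle of length $4$ contains only \emph{two} black edges, and splitting a single cycle into three requires at least three black edges on that cycle; so the plan does not apply when no alternating cycle of length $\geq 6$ exists, and in general the $+2$ move may have to involve black edges from more than one cycle. Moreover, even when a long cycle is available, you must choose the four black edges so that the \emph{specific} re-pairing induced by a block interchange (the cut points are reconnected in a fixed crossing pattern, not arbitrarily) produces two extra cycles; ``one hopes to identify four black edges'' is precisely the lemma to be proved, not a proof of it. Christie's own argument supplies an explicit construction (choosing the blocks from the positions of the smallest element not yet in place and of the value that should precede it), and without such a construction, or at least a complete case analysis covering short cycles and multi-cycle moves, the upper bound --- and hence the theorem --- is not established.
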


Note that in particular this implies that $n+1$ and $c(G(p))$ are always
of the same parity. Christie has also provided an algorithm that sorts
$p$ using  $\frac{n+1-c(G(p))}{2}$ block interchanges. As the identity
permutation is the only $n$-permutation that takes zero block interchanges
to sort, it is the only $n$-permutation $p$ satisfying $c(G(p))=n+1$.

Theorem \ref{christie} shows that
 in order to find the average number $a_n$ of block interchanges
needed to sort an $n$-permutation, we will need the average value of $c(G(p))$
for such permutations. The following definition \cite{doignon} will be useful.

\begin{definition} The {\em Hultman number} $\Sh(n,k)$ is the number of
$n$-permutations $p$ satisfying $c(G(p))=k$. 
\end{definition}

So the Hultman numbers are somewhat analogous to the signless Stirling numbers
of the first kind that count $n$-permutations with $k$ cycles. 

This is a good place to point out that in this paper, we will sometimes
discuss {\em cycles of the permutation} $p$ in the traditional sense, 
which are not to be confused with
 the {\em directed cycles of} $G(p)$, counted by $c(G(p))$. 
Following \cite{doignon}, the number of cycles of the permutation $p$ will
be denoted by $c(\Gamma(s))$. Indeed, the cycles of a permutation $p$ are
equivalent to the directed cycles of the graph $\Gamma(p)$ in which there is
an edge from $i$ to $j$ if $p(i)=j$. 
For instance, if $p=1234$, then 
$c(\Gamma(p))=4$, while $c(G(p))=5$.

The following recent theorem of Doignon and Labarre \cite{doignon}
brings the Hultman numbers closer to the topic of enumerating permutations
according to their cycle structure (in the traditional sense).
Let $S_n$ denote the symmetric group of degree $n$. 

\begin{theorem} \label{doignon}
  The  Hultman number $\Sh(n,k)$ is equal to the number
of ways to obtain the cycle $(12..\cdots n(n+1))\in S_{n+1}$
 as a product  $qr$
of permutations, where $q\in S_{n+1}$ is any cycle of length $n+1$,
 and the permutation $r\in S_{n+1}$ has exactly $k$ cycles, that is
$c(\Gamma(r))=k$. 
\end{theorem}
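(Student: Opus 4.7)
My plan is to construct an explicit bijection between $n$-permutations $p$ with $c(G(p))=k$ and factorizations $\sigma = qr$ of the long cycle $\sigma=(0,1,\ldots,n)\in S_{n+1}$, where $q$ is an $(n+1)$-cycle and $c(\Gamma(r))=k$.

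The first step is to recast the cycle graph in group-theoretic language. I identify $p$ with the permutation $\pi \in S_{n+1}$ on $\{0,1,\ldots,n\}$ defined by $\pi(0)=0$ and $\pi(i)=p_i$ for $1\le i\le n$; there are $n!$ such $\pi$, matching the number of $n$-permutations. The gray edges $i\to i+1\pmod{n+1}$ of $G(p)$ are then the edges of $\sigma$, while the black edges $\pi(i)\to\pi(i-1)$ are the edges of $\beta:=\pi\sigma^{-1}\pi^{-1}$, since $\beta(\pi(i))=\pi(i-1)=\pi(\sigma^{-1}(i))$. Being conjugate to $\sigma^{-1}$, the permutation $\beta$ is itself an $(n+1)$-cycle.

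The heart of the argument is to observe that an alternating directed cycle in $G(p)$ is exactly an orbit of the map ``traverse a gray edge, then a black edge,'' i.e., of $\beta\sigma$. Starting at $v$, one such step lands at $\beta(\sigma(v))$; continuing until we first return to $v$, the $\ell$ vertices encountered give an alternating cycle of length $2\ell$ in $G(p)$, and conversely every alternating cycle arises this way. Therefore
\[
c(G(p)) \;=\; c(\Gamma(\beta\sigma)).
\]
Setting $q := \beta^{-1} = \pi\sigma\pi^{-1}$ and $r := \beta\sigma$ yields a factorization $\sigma = qr$ in which $q$ is an $(n+1)$-cycle and $c(\Gamma(r))=k$.

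It remains to verify that $\pi \mapsto q=\pi\sigma\pi^{-1}$ is a bijection from $\{\pi\in S_{n+1}:\pi(0)=0\}$ onto the set of $(n+1)$-cycles in $S_{n+1}$; both have cardinality $n!$. For an $(n+1)$-cycle written as $q=(c_0,c_1,\ldots,c_n)$, the conjugacy equation $\pi\sigma\pi^{-1}=q$ forces $\pi(i)=c_{i+j}$ for some cyclic shift $j$, and the constraint $\pi(0)=0$ pins down the unique $j$ with $c_j=0$. Since $r=q^{-1}\sigma$ is determined by $q$, the map $\pi\mapsto(q,r)$ is a bijection preserving the statistic $c(G(p))=c(\Gamma(r))$. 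The main (and essentially only non-routine) obstacle is the alternating-cycle identity displayed above; once the correct conjugation is identified, the rest of the argument is bookkeeping.
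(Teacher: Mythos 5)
Your proposal is correct. Note, however, that the paper itself does not prove this statement at all: Theorem \ref{doignon} is imported from Doignon and Labarre \cite{doignon} as a black box, so there is no internal proof to compare against. What you have written is a self-contained bijective proof in the spirit of the original source: encode the black edges as the conjugate $\beta=\pi\sigma^{-1}\pi^{-1}$ of the gray-edge permutation $\sigma$, observe that the alternating cycles of $G(p)$ are precisely the orbits of $\beta\sigma$ (each vertex has a unique outgoing edge of each color, so the alternating walk is forced, which also gives the uniqueness of the decomposition that the paper asserts separately), and then check that $\pi\mapsto\pi\sigma\pi^{-1}$ restricted to $\{\pi:\pi(0)=0\}$ hits each $(n+1)$-cycle exactly once because the centralizer of $\sigma$ is $\langle\sigma\rangle$ and exactly one element of each coset $\pi_0\langle\sigma\rangle$ fixes $0$. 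All of these steps are right. Two small points you may want to make explicit if this were written up: first, you work on $\{0,1,\ldots,n\}$ with the long cycle $(0,1,\ldots,n)$ while the statement uses $(1,2,\ldots,n+1)$ on $\{1,\ldots,n+1\}$, which is only a relabeling; second, your factorization $\sigma=qr$ uses the right-to-left composition convention, and since $r$ is forced to be $q^{-1}\sigma$ once $q$ is chosen, the count of factorizations is the same under either convention (as $qr$ and $rq$ are conjugate). With those remarks your argument fully establishes the theorem, which is more than the paper itself does.
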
 

\section{Our Main Result}
The following immediate consequence of Theorem \ref{doignon} is more suitable
for our purposes.

\begin{corollary}
 The  Hultman number $\Sh(n,k)$ is equal to the number of $(n+1)$-cycles
$q$ so that the product $(12\cdots n(n+1))q$ is a permutation
with exactly $k$ cycles, that is, $c(\Gamma((12\cdots n(n+1))q)=k$.
\end{corollary}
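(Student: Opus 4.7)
The plan is to obtain the corollary directly from Theorem \ref{doignon} by rewriting the counting problem so that one of the two factors is eliminated. Write $c = (12\cdots n(n+1))$ throughout. Theorem \ref{doignon} asserts that $\Sh(n,k)$ equals the number of ordered pairs $(q,r) \in S_{n+1} \times S_{n+1}$ satisfying $qr = c$, in which $q$ is an $(n+1)$-cycle and $r$ has exactly $k$ cycles. Since $q$ determines $r$ via $r = q^{-1}c$, this is the same as the number of $(n+1)$-cycles $q$ for which $c(\Gamma(q^{-1}c)) = k$.

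The next step is to transport the factor $q^{-1}$ into a factor $q$ by applying the involution $q \mapsto q^{-1}$ on the set of $(n+1)$-cycles. This map is clearly a bijection, since the inverse of an $(n+1)$-cycle is again an $(n+1)$-cycle. After this reparametrization, $\Sh(n,k)$ equals the number of $(n+1)$-cycles $q$ for which $c(\Gamma(qc)) = k$.

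Finally, I would observe that $qc$ and $cq$ are conjugate in $S_{n+1}$ via the identity $c^{-1}(cq)c = qc$, so they share the same cycle type and in particular the same number of cycles. Substituting yields $\Sh(n,k) = |\{q : q \text{ is an } (n+1)\text{-cycle}, \ c(\Gamma(cq)) = k\}|$, which is precisely the statement of the corollary.

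There is no genuine obstacle here; the entire content is bookkeeping about left versus right multiplication by the fixed cycle $c$, which is why the authors label this an immediate consequence of Theorem \ref{doignon} rather than a separate proposition.
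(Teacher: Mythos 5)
Your argument is correct and is exactly the routine reparametrization the paper has in mind when it calls the corollary an immediate consequence of Theorem \ref{doignon} (the paper supplies no written proof): solve for $r=q^{-1}c$, apply the bijection $q\mapsto q^{-1}$ on $(n+1)$-cycles, and use that $qc$ and $cq$ are conjugate and hence have the same number of cycles. Nothing is missing.
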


\begin{example} For any fixed $n$, we have $\Sh(n,n+1)=1$ since $c(G(p))=n+1$
if and only if $p$ is the identity permutation. And indeed, there is
exactly one $(n+1)$-cycle (in fact, one permutation)
 $q\in S_{n+1}$ so that  $(12..\cdots n(n+1))q$ has $n+1$ cycles, 
namely $q=(12\cdots n(n+1))^{-1}=(1(n+1)n\cdots 2)$. 
\end{example}

In other words, finding the average of the numbers $c(G(p))$ over 
all $n$-permutations $p$ is equivalent
to finding the average of the numbers
$c(\Gamma((12..\cdots n(n+1))q)$, where $q$ is an $(n+1)$-cycle.

Let us consider the product $s=(12\cdots n)z$, where $z$ is a cycle of length
$n$. Let us insert the entry $n+1$ into $z$ to get the permutation $z'$ 
so that $n+1$ is inserted
between two specific entries $a$ and $b$ in the following sense.
\[z'(i)=\left\{ \begin{array}{l@{\ }l}
z(i) \hbox{ if $i\notin \{a,n+1\}$},\\
n+1 \hbox{ if $i=a$, and}\\
b \hbox{ if $i=n+1$.}
\end{array}\right.
\]
See Figure \ref{insert} for an illustration.
\begin{figure}[ht]
 \begin{center}
  \epsfig{file=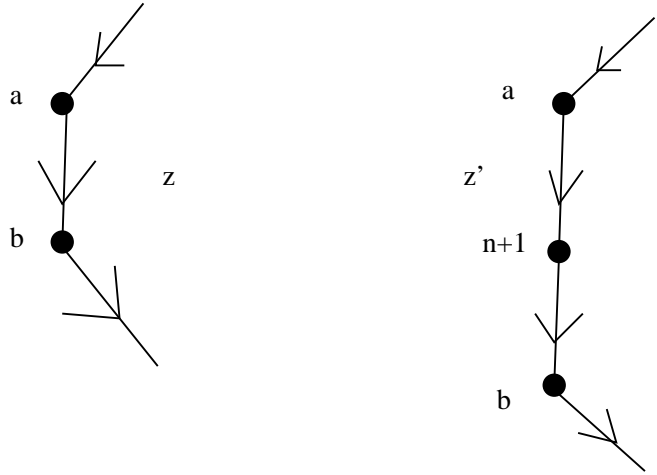}
  \label{insert}
\caption{How $z'$ is obtained from $z$.}
 \end{center}
\end{figure}

The following proposition is the first step towards describing how the
Hultman numbers grow. 

\begin{proposition} \label{equal}
Let $a$, $b$, and $z'$ be defined as above, and let 
$s'=(12\cdots (n+1)z'$. Then we have
 \[c(\Gamma(s' ) = \left\{ \begin{array}{l@{\ }l}
c(\Gamma(s)) -1 \hbox{ if $2\leq a$, and 
$a-1$ and $z(1)$ are not in the same cycle
of $s$},\\
c(\Gamma(s))+1 \hbox{ if $2\leq a$, and   $a-1$ and $z(1)$ are in 
the same cycle
of $s$, and}\\ c(\Gamma(s))+1 \hbox{ if $a=1$}.
\end{array}\right.
\]
\end{proposition}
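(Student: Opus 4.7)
The plan is to compute $s'$ explicitly in terms of $s$, and then reduce the problem to the classical fact that multiplying a permutation by a transposition changes the cycle count by exactly $\pm 1$.

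First I would unwind the product $s' = (12\cdots (n+1))\, z'$ value by value. Because $z'$ differs from $z$ only at the two positions $a$ and $n+1$, and the $(n+1)$-cycle $(12\cdots(n+1))$ acts by a single cyclic shift, one finds that $s'$ coincides with $s$ on $[n]$ outside two exceptional indices, while the new vertex $n+1$ lands in a predictable spot. Concretely, for $a \geq 2$ the exceptional values turn out to be
\[
s'(a-1) = n+1, \qquad s'(n) = z(a), \qquad s'(n+1) = z(1),
\]
to be compared with $s(a-1) = z(a)$ and $s(n) = z(1)$. In the degenerate case $a = 1$ the index $a-1$ falls outside $[n]$ and $n+1$ becomes a fixed point of $s'$, while $s'$ agrees with $s$ on all of $[n]$; this gives $c(\Gamma(s')) = c(\Gamma(s))+1$ at once, handling the third case.

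Next, for $a \geq 2$, I would contract the vertex $n+1$ out of the functional graph of $s'$: define $s''$ on $[n]$ by routing the path $a-1 \to n+1 \to z(1)$ directly, so that $s''(a-1) = z(1)$, $s''(n) = z(a)$, and $s''(i) = s(i)$ elsewhere. Since $n+1$ is not a fixed point of $s'$, removing it from its cycle does not change the cycle count, so $c(\Gamma(s')) = c(\Gamma(s''))$. Inspection gives $s'' = s \circ (a-1, n)$, and the classical transposition lemma then yields $c(\Gamma(s'')) = c(\Gamma(s))+1$ if $a-1$ and $n$ lie in the same cycle of $s$ and $c(\Gamma(s))-1$ otherwise. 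To rephrase the condition as in the statement, I would finally observe that $s(n) = z(1)$, so $n$ and $z(1)$ always share a cycle of $s$; hence ``$a-1$ and $n$ lie in the same cycle of $s$'' is equivalent to ``$a-1$ and $z(1)$ lie in the same cycle of $s$''.

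The main obstacle is the bookkeeping in the first step: one must carefully track how a single modification of $z$ at the two positions $a$ and $n+1$ propagates through the shift $(12\cdots(n+1))$ into a modification of $s'$ at the (a priori different) positions $a-1$ and $n$, and one must verify that no extra discrepancies appear when some of the indices $a, a-1, n, z(a), z(1), n+1$ happen to coincide. Once this is settled, the remaining steps — the contraction of $n+1$, the transposition lemma, and the reformulation via $s(n) = z(1)$ — are essentially routine.
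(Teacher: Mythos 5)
Your argument is correct, and it checks out in detail: with the paper's left-to-right convention for the product, $s'(i)=z'(i+1)$ agrees with $s(i)=z(i+1)$ except at $i=a-1$ and $i=n$, the values you list are right, $n+1$ is never a fixed point of $s'$ when $a\geq 2$ (since $s'(n+1)=z(1)\in[n]$), and $a-1\neq n$ because $a\leq n$. The route is, however, packaged differently from the paper's. The paper argues directly on the cycle diagrams: it writes down the cycle(s) of $s$ containing $a-1$ and $z(1)$, traces where $s'$ sends $a-1$, $n+1$, and $n$, and observes by inspection that two cycles merge (with $n+1$ absorbed) or one cycle splits. You instead contract the new vertex $n+1$ out of the functional graph and recognize the resulting permutation as $s$ composed with the transposition $(a-1,\,n)$, then invoke the standard fact that multiplying by a transposition splits a cycle or merges two according to whether the two points share a cycle, finishing with the observation that $s(n)=z(1)$ forces $n$ and $z(1)$ to be in the same cycle of $s$. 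The combinatorial content is the same merge/split dichotomy, but your reduction to the transposition lemma is a bit more systematic: it replaces the ad hoc cycle-tracing (and the accompanying figures) with a single well-known lemma, and it makes the degenerate-index bookkeeping easier to certify. The paper's version, on the other hand, is more self-contained and makes visually explicit exactly which cycles are affected, which is what the later lemmas actually use.
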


\begin{proof}
Let us assume first that $a\geq 2$, and that $a-1$ is in a cycle $C_1$
of $s$, and $z(1)$ is in a different cycle $C_2$ of $s$. Let 
$C_1=((a-1)b\cdots )$ and let $C_2=(z(1)\cdots n)$. After the insertion
of $n+1$ into $z$, the newly obtained permutation $s'=(12..\cdots (n+1)z'$
sends $a-1$ to $n+1$, then $n+1$ to $z(1)$, then leaves the rest of 
$C_2$ unchanged till its last entry. Then it sends $n$ back to 
$z'(n+1)=b$, from where it continues with the rest of $C_1$ with no change.
So in $s'$, the cycles $C_1$ and $C_2$ are united, the entry $n+1$ joins
their union, and there is no change to the other cycles of $s$.
See Figure \ref{firstcase} for an illustration.

\begin{figure}[ht]
 \begin{center}
  \epsfig{file=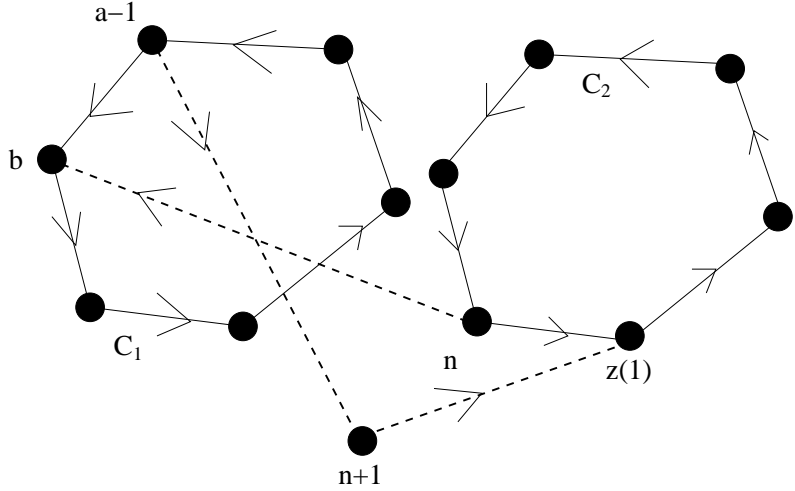}
  \label{firstcase}
\caption{If $a-1$ and $z(1)$ are in different cycles of $s$, those 
cycles will turn into one.}
 \end{center}
\end{figure}

Let us now assume that  $a\geq 2$, and that $a-1$ and $z(1)$ are both
in the same cycle $C$ of $s$. Then $C=((a-1)b\cdots n(z(1))\cdots )$.
After the insertion
of $n+1$ into $z$, the newly obtained permutation $s'=(12..\cdots (n+1)z'$
sends $a-1$ to $n+1$, then $n+1$ to $z(1)$, cutting off the part of
$C$ that was between $a-1$ and $n$. So $C$ is split into two cycles, 
the cycle $C'=((a-1)(n+1)z(1)\cdots )$ and the cycle $C''=(b\cdots n)$. 
Note that $s'(n)=b$ since $z'(n+1)=b$. See Figure \ref{secondcase}
 for an illustration.

\begin{figure}[ht]
 \begin{center}
  \epsfig{file=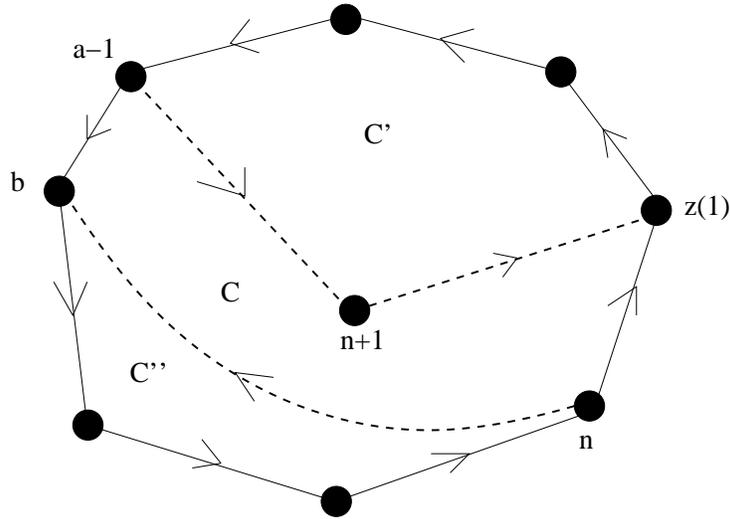}
  \label{secondcase}
\caption{If $a-1$ and $z(1)$ are in the same cycle of $s$, that 
cycle will split into two cycles.}
 \end{center}
\end{figure}

Finally, if $a=1$, then $s'(n+1)=(n+1)$, and the rest of the cycles
of $s$ do not change. 
\end{proof}

Let $TC_n$ denote the set of $n$-permutations  that can be
obtained as a product of two cycles of length $n$. Let
$a_n$ be the average number of cycles of the elements of $TC_n$.
 
\begin{lemma} \label{oddcase}
For all positive integers $m$, 
we have $a_{2m+2}=a_{2m+1}+\frac{1}{2m+1}$. 
\end{lemma}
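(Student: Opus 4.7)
The plan is to apply Proposition~\ref{equal} summed over all insertion positions and then reduce the lemma to a combinatorial averaging identity which I will settle by an involution on $n$-cycles.

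First I will exploit the bijection between pairs $(z,a)$---with $z$ an $n$-cycle and $a \in \{1,\ldots,n\}$---and the $(n+1)$-cycles $z'$ built into the insertion construction preceding Proposition~\ref{equal}. Writing $s=(12\cdots n)z$ and letting $A(z)$ be the number of $a\in\{2,\ldots,n\}$ for which $a-1$ lies in the same cycle of $s$ as $z(1)$, the three cases of Proposition~\ref{equal} combine at fixed $z$ to
\[
\sum_{a=1}^{n} c(\Gamma(s')) \;=\; n\,c(\Gamma(s)) \;+\; 1 \;+\; A(z) \;-\; (n-1-A(z)) \;=\; n\,c(\Gamma(s)) + 2A(z) - n + 2.
\]
Summing over all $(n-1)!$ $n$-cycles and dividing by the $n!$ $(n+1)$-cycles gives
\[
a_{n+1} \;=\; a_n \;+\; \frac{2\bar A - n + 2}{n},
\]
where $\bar A$ is the average value of $A(z)$. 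Thus the desired $a_{2m+2}=a_{2m+1}+\frac{1}{2m+1}$ is equivalent to $\bar A = (n-1)/2$ when $n=2m+1$.

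Next I would pair each $n$-cycle $z$ with its inverse $z^{-1}$; since an $n$-cycle has order $n$, this is a fixed-point-free involution on the set of $n$-cycles for $n\geq 3$. It then suffices to establish the pointwise identity
\[
A(z) + A(z^{-1}) \;=\; n-1 \qquad \text{for every $n$-cycle $z$ with $n$ odd,}
\]
which averaged over the $(n-1)!/2$ pairs yields $\bar A = (n-1)/2$.

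The main obstacle is this paired identity. Unfolded it asks that
\[
|C_s(z(1)) \cap \{1,\ldots,n-1\}| + |C_{s^*}(z^{-1}(1)) \cap \{1,\ldots,n-1\}| \;=\; n-1,
\]
where $s^* = \tau z^{-1}$ with $\tau = (12\cdots n)$. My approach would exploit the algebraic relation $s^* = \tau s^{-1}\tau$, which ties the cycle of $s^*$ through $z^{-1}(1)$ to cycle data of $s$ up to a shift by $\tau$, in order to exhibit an explicit bijection between $C_s(z(1)) \cap \{1,\ldots,n-1\}$ and $\{1,\ldots,n-1\} \setminus C_{s^*}(z^{-1}(1))$. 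Oddness of $n$ is essential here: both $\tau$ and $z$ are then even permutations, forcing $c(\Gamma(s))$ odd, and the identity can be checked to fail when $n$ is even (for instance with $n=4$ and $z=(1234)$, where $A(z)+A(z^{-1})=1\neq 3$), so this parity hypothesis cannot be removed.
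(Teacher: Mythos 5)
Your accounting is right up to the last step, and it is essentially a deterministic restatement of the paper's argument: the bijection between pairs $(z,a)$ and $(n+1)$-cycles $z'$, the tally of the three cases of Proposition~\ref{equal} giving $\sum_{a=1}^{n}c(\Gamma(s'))=n\,c(\Gamma(s))+2A(z)-n+2$, and the resulting recursion $a_{n+1}=a_n+\frac{2\bar A-n+2}{n}$ are all correct, so the lemma is indeed equivalent to $\bar A=(n-1)/2$ for $n=2m+1$. At this point the paper invokes Theorem~\ref{stanley}: since $z(1)=s(n)$, the quantity $A(z)$ equals $|C_s(n)|-1$, the number of elements of $\{1,\dots,n-1\}$ sharing the cycle of $s=(1\,2\cdots n)z$ that contains $n$, and $\bar A=(n-1)/2$ is exactly the (averaged) odd case of Stanley's probability-$\frac12$ statement. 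The paper imports this from Stanley's non-elementary proof and explicitly remarks that no combinatorial proof is known.

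The gap is that your substitute for this step, the pointwise identity $A(z)+A(z^{-1})=n-1$ for odd $n$, is false. Rewritten via $A(z)=|C_s(n)|-1$ it would assert $|C_s(n)|+|C_{s^*}(n)|=n+1$ for every $n$-cycle $z$, which must fail whenever $\tau z$ and $\tau z^{-1}$ are both $n$-cycles. This happens already for $n=5$: with $\tau=(1\,2\,3\,4\,5)$ and $z=(1\,3\,5\,2\,4)$ one computes $\tau z=(1\,4\,2\,5\,3)$ and $\tau z^{-1}=(1\,5\,4\,3\,2)$ (single $5$-cycles under either order of composition), so $A(z)=A(z^{-1})=4$ and the sum is $8$, not $4$. (It happens to hold for $n=3$, which is why a small check can mislead.) So the identity is true only on average over $z$, and that average statement is precisely the hard content of Theorem~\ref{stanley}; the inversion involution does not prove it. To close the argument you must either cite Theorem~\ref{stanley} as the paper does, or produce a genuinely new proof that the expected size of the cycle of $\tau z$ containing $n$ is $(n+1)/2$ for odd $n$ — which would itself be a publishable combinatorial proof of the odd case of Stanley's result.
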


\begin{proof}
We apply Proposition \ref{equal}, with $n$ replaced by $n+1=2m+1$. This
means $n+1$ is an odd number. So $z'$
is a cycle of length $2m+2$ obtained from a cycle $z$ of 
length $2m+1$ through the insertion of the maximal element $2m+2$ into 
one of $2m+1$ possible positions. By Proposition  \ref{equal}, some of these
insertions increase $c(G(s))$ by one, and others decrease $c(G(s))$ by one,
depending on whether $a-1$ and $z(1)$ are in the same cycle of $s$ or not.
The question is, of course, how many times they will be in the same cycle of 
$s$. 

This question is easily seen to be equivalent to the following question.

\begin{question} Let $i$ and $j$ be two fixed elements of the 
set $\{1,2,\cdots ,h\}$
Select an element $p$ of $TC_h$ at random. What is the probability that
 $p$ contains $i$ and $j$ in the same cycle?
\end{question}

The first author of this article has conjectured that the answer to this
question was 1/2 for odd $h$. This conjecture was recently
proved by Richard Stanley \cite{stanley},
 who also settled the question for even values
of $h$.

\begin{theorem} \label{stanley} \cite{stanley} 
Let $i$ and $j$ be two fixed, distinct
 elements of the set $[h]=\{1,2,\cdots ,h\}$, where $h>1$.
Let $x$ and $y$ be two randomly selected $h$-cycles over $[h]$.
Let $p(h)$ be the probability that $i$ and $j$ are in the same cycle of
$xy$. Then 
\[p(h)= \left\{ \begin{array}{l@{\ }l} 
\frac{1}{2} \hbox{ $if$ $h$ is odd, and }\\
\frac{1}{2}-\frac{2}{(h-1)(h+2)}  \hbox{ $if$ $h$ is even}.
\end{array}\right.
\]
\end{theorem}

The  proof of Lemma \ref{oddcase} is now straightforward. If $a\neq 1$,
then $a-1$ and $z(1)$ are equally likely to be in the same cycle or not in
the same cycle of $s$. Therefore, an increase of one or a decrease of
one in $c(G(s))$ is equally likely. If, on the other hand, $a=1$, which
occurs in $1/(2m+1)$ of all cases, then $c(G(s))$ increases by one. 
So \[a_{2m+2}=\frac{2m}{2m+1} a_{2m+1} +    \frac{1}{2m+1}(a_{2m+1}+1)
=a_{2m+1}+   \frac{1}{2m+1}.\]
\end{proof}

Note that the statement of Lemma \ref{oddcase} holds even when $m=0$, since
$a_2=2=a_1+1$.

As Theorem \ref{stanley} provided a formula for $P(h)$ for even values of
$h$ as well as odd values, we can state and prove the analogous version of
Lemma \ref{oddcase} for the integers not covered there.
 
\begin{lemma} \label{evencase}
For all positive integers $m$, we have
\[a_{2m+1}=a_{2m}+\frac{m}{2(m+1)(m+2)}=a_{2m}+\frac{1}{2m}-
\frac{1}{m(m+1)}.\]
\end{lemma}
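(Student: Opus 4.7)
The plan is to follow the strategy of Lemma~\ref{oddcase} verbatim, only now applying Proposition~\ref{equal} with $n = 2m$ (so that the inserted element is $2m+1$). Start from $s = (12\cdots (2m))z$ for $z$ a uniformly random $2m$-cycle, and form $s' = (12\cdots (2m+1))z'$ by inserting $2m+1$ into $z$ at the position indexed by $a \in \{1,\ldots,2m\}$. By Proposition~\ref{equal}, the quantity $c(\Gamma(s'))-c(\Gamma(s))$ equals $+1$ when $a = 1$ (frequency $1/(2m)$), equals $+1$ when $a \ge 2$ and $a-1$ lies in the same cycle of $s$ as $z(1)$, and equals $-1$ in the remaining case.

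The reduction carried out in the proof of Lemma~\ref{oddcase}, identifying the event ``$a-1$ and $z(1)$ share a cycle in $s$'' with Stanley's ``$i$ and $j$ share a cycle in a uniform element of $TC_h$'', transfers without change, only now with $h = 2m$. The essential difference is that Theorem~\ref{stanley} now supplies the even-case probability
\[ p(2m) = \tfrac{1}{2} - \tfrac{2}{(2m-1)(2m+2)} \]
in place of the clean value $\tfrac{1}{2}$ used in the odd case.

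Combining these, the expected value of $c(\Gamma(s')) - c(\Gamma(s))$ is
\[ \frac{1}{2m} + \frac{2m-1}{2m}\bigl(2p(2m)-1\bigr). \]
Substituting the expression for $p(2m)$, the factor $2m-1$ cancels against the denominator, reducing the second summand to $-1/(m(m+1))$. Hence $a_{2m+1}-a_{2m} = 1/(2m) - 1/(m(m+1))$, which is the formula asserted by the lemma.

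Conceptually the argument is a direct parallel of Lemma~\ref{oddcase}; the only new ingredient is Stanley's nontrivial correction term in the even case. The only mild obstacle is arithmetic bookkeeping, keeping the two contributions (from $a=1$ versus $a\ge 2$) signed correctly and verifying that the cancellation between $2m-1$ and the denominator of $p(2m)$ lands exactly on the claimed closed form.
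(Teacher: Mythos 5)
Your proposal is correct and follows essentially the same route as the paper: apply Proposition~\ref{equal} with the inserted element $2m+1$, weight the $a=1$ case by $1/(2m)$, and use the even case of Theorem~\ref{stanley} with $h=2m$ to get an expected change of $\frac{1}{2m}-\frac{1}{m(m+1)}$. Your arithmetic matches the paper's displayed recurrence (and confirms that the lemma's second expression, not the first, is the one the derivation actually yields).
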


\begin{proof}
This is very similar to the proof of Lemma \ref{oddcase}.
If $a\neq 1$, which happens in $(2m-1)/(2m)$ of all cases, then
the probability of $a-1$ and $z(1)$ falling into the same cycle of $s$
 is $\frac{1}{2}-\frac{2}{(2m+1)(2m+4)}$ by Theorem
\ref{stanley}. By Proposition \ref{equal}, in these cases $c(G(s))$ grows
by one.  If $a=1$, which occurs in $1/(2m+2)$ of all cases, $c(G(s))$
always grows by one. 
So 
\begin{eqnarray*} a_{2m+1} & = & \frac{2m-1}{2m}\cdot
\left(\frac{1}{2}-\frac{2}{(2m-1)(2m+2)}\right)
(a_{2m}+1) \\
& + & \frac{2m-1}{2m}\cdot\left(\frac{1}{2}+
\frac{2}{(2m-1)(2m+2)}\right)
(a_{2m}-1) \\
& + & \frac{1}{2m} (a_{2m}+1),\end{eqnarray*}
which is equivalent to the statement of the lemma as can be seen after 
 routine rearrangements.
\end{proof}

We are now in position to state and prove our formula for the average number
$a_n$ of cycles in elements of $TC_n$.

\begin{theorem} \label{average}
We have $a_1=1$, and 
\[a_n= \frac{1}{\lfloor (n-1)/2 \rfloor +1} + 
\sum_{i=1}^{n-1} \frac{1}{i}. \]
\end{theorem}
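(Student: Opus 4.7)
The plan is to prove the formula by induction on $n$, using Lemma \ref{oddcase} and Lemma \ref{evencase} as the recursive engine. The base case $n=1$ holds directly since $\lfloor 0/2\rfloor+1=1$ and the empty sum is zero, giving $a_1=1$. The inductive step splits naturally into two cases according to the parity of $n$, one handled by each lemma.

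For the even-to-odd step, suppose the formula holds for $n=2m$, so that $a_{2m} = \frac{1}{m}+\sum_{i=1}^{2m-1}\frac{1}{i}$ (since $\lfloor(2m-1)/2\rfloor+1=m$). Applying Lemma \ref{evencase} gives
\[
a_{2m+1}=a_{2m}+\frac{1}{2m}-\frac{1}{m(m+1)}
=\sum_{i=1}^{2m}\frac{1}{i}+\frac{1}{m}-\frac{1}{m(m+1)}.
\]
The key algebraic identity $\frac{1}{m}-\frac{1}{m(m+1)}=\frac{1}{m+1}$ then collapses the last two terms to $\frac{1}{m+1}=\frac{1}{\lfloor 2m/2\rfloor+1}$, which matches the claimed formula at $n=2m+1$.

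For the odd-to-even step, suppose the formula holds for $n=2m+1$, so that $a_{2m+1}=\frac{1}{m+1}+\sum_{i=1}^{2m}\frac{1}{i}$. Applying Lemma \ref{oddcase} gives
\[
a_{2m+2}=a_{2m+1}+\frac{1}{2m+1}
=\frac{1}{m+1}+\sum_{i=1}^{2m+1}\frac{1}{i},
\]
and since $\lfloor(2m+1)/2\rfloor+1=m+1$, this agrees with the formula at $n=2m+2$.

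There is no real obstacle here beyond the small telescoping identity in the odd case; the heavy lifting was already done in establishing the two recursive lemmas (and, behind them, Stanley's theorem). The mild subtlety is keeping track of which floor value goes with which parity, and noting that both cases produce the same floor contribution $\frac{1}{m+1}$ for consecutive $n\in\{2m+1,2m+2\}$ — so the floor term stays put while an extra harmonic term is added, which is exactly what Lemma \ref{oddcase} provides.
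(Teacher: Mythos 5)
Your induction is correct and is essentially the paper's own argument: the paper likewise deduces the formula from Lemmas \ref{oddcase} and \ref{evencase}, merely summing the increments as the telescoping series $\sum_{i=1}^t \frac{1}{i(i+1)}=1-\frac{1}{t+1}$ instead of phrasing it as an inductive step, and your identity $\frac{1}{m}-\frac{1}{m(m+1)}=\frac{1}{m+1}$ is exactly that telescoping in disguise. The only detail to add is that passing from $a_1$ to $a_2$ uses the $m=0$ case of Lemma \ref{oddcase}, which is not covered by the lemma as stated (it assumes $m\geq 1$) but is verified directly in the remark following it.
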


Note that this formula produces even the correct value even for $n=1$, 
that is, it produces the equality $a_1=1$.

\begin{proof} (of Theorem \ref{average})
 The statement is now a direct consequence of Lemmas 
\ref{oddcase} and \ref{evencase} if we note the telescoping sum
$\sum_{i=1}^t \frac{1}{i(i+1)} = 1-\frac{1}{t+1}$ obtained when
summing the values computed in  
Lemma \ref{evencase}. 
\end{proof}

Note that it is well-known that on  average, an $n$-permutation has
$\sum_{i=1}^n \frac{1}{i}$ cycles. This is the average value of $c(\Gamma(p))$
for a randomly selected $n$-permutation. Theorem  \ref{average} shows
that the average value of $c(G(p))$ differs from this by about $1/n$. 

Finally, our main goal is easy to achieve.

\begin{theorem}
The average number of  block interchanges needed to sort an $n$-permutation
is \[b_n=\frac{n- \frac{1}{\lfloor n/2 \rfloor +1}-
\sum_{i=2}^{n} \frac{1}{i}}{2}.\]
\end{theorem}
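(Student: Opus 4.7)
The plan is to reduce the theorem immediately to material already developed. First I would apply Theorem~\ref{christie}, which says the number of block interchanges needed to sort $p$ is $(n+1-c(G(p)))/2$. By linearity of expectation, averaging over uniformly random $p\in S_n$ gives
\[ b_n = \frac{n+1 - E[c(G(p))]}{2}, \]
so the entire task reduces to identifying $E[c(G(p))]$.

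Next I would argue that $E[c(G(p))] = a_{n+1}$. By the definition of the Hultman numbers we have $E[c(G(p))] = \frac{1}{n!}\sum_k k\cdot \Sh(n,k)$, and the Corollary to Theorem~\ref{doignon} recasts $\Sh(n,k)$ as the number of $(n+1)$-cycles $q$ with $c(\Gamma((12\cdots(n+1))q))=k$. Since there are exactly $n!$ cycles of length $n+1$ in $S_{n+1}$, summing over these $(n+1)$-cycles gives
\[ E[c(G(p))] = \frac{1}{n!} \sum_{q \text{ an }(n+1)\text{-cycle}} c(\Gamma((12\cdots(n+1))q)) = a_{n+1}, \]
the last equality being exactly the quantity that Theorem~\ref{average} computes (note that this is the interpretation of $a_m$ used implicitly throughout the proofs of Lemmas~\ref{oddcase} and \ref{evencase}).

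Finally I would substitute the closed form from Theorem~\ref{average} with $n$ replaced by $n+1$, namely
\[ a_{n+1} = \frac{1}{\lfloor n/2 \rfloor + 1} + \sum_{i=1}^{n} \frac{1}{i}, \]
plug into $b_n = (n+1 - a_{n+1})/2$, and absorb the $i=1$ summand (which contributes $1$) into the leading $n+1$ to leave $n$ minus the truncated harmonic tail. This yields the stated formula. There is no genuine obstacle here: the only substantive step is the identification $E[c(G(p))] = a_{n+1}$ through the double-counting described above, and everything else is routine algebra.
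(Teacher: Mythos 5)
Your proposal is correct and follows essentially the same route as the paper: reduce via Theorem~\ref{christie} and linearity of expectation, identify the average of $c(G(p))$ with $a_{n+1}$ through Theorem~\ref{doignon} (the paper states this identification without spelling out the $n!$-to-$n!$ bijective count that you make explicit), and substitute the formula of Theorem~\ref{average}. The algebra absorbing the $i=1$ term checks out, so nothing further is needed.
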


\begin{proof} By Theorem \ref{doignon} and Theorem \ref{average},
the average number value of $c(G(p))$ over all permutations $p$ of length
$n$ is $a_{n+1}= \frac{1}{\lfloor n/2 \rfloor +1} + 
\sum_{i=1}^{n} \frac{1}{i}$.
Our claim now immediately follows from Theorem \ref{christie}.
\end{proof}

So the average number of block interchanges needed to sort an
$n$-permutation is close to $(n-\log n)/2$. 

\section{Remarks and Further Directions}
Richard Stanley's proof of Theorem \ref{stanley} is not elementary.
It uses symmetric functions, exponential generating functions, integrals,
and a formula of Boccara \cite{boccara}. A more combinatorial proof of
the stunningly simple answer for the case of odd $k$ would still be 
interesting.

As pointed out by Richard Stanley \cite{stanleyp},
there is an alternative way to obtain the result of Theorem \ref{average} 
without
using Theorem \ref{stanley}, but that proof in turn uses symmetric functions
and related machinery. It is shown in Exercises 69(a) and 69(c) of
\cite{stanleyex} that
\begin{equation}
\label{stirling} 
P_n(q)=\sum_{p\in TC_n}q^{c(\Gamma(p))}=\frac{1}{{n+1\choose 2}}
\sum_{i=0}^{\lfloor (n-1)/2 \rfloor} c(n+1,n-2i)q^{n-2i},\end{equation}
where, as usual, $c(n,k)$ is a signless Stirling number of the first kind,
that is, the number of permutations of length $n$ with $k$ cycles.
Now $a_n$ can be computed by considering $P_n'(1)$, which in turn 
can be computed by considering the well-known identity
\[F_{n+1}(x)=\sum_{k=1}^{n+1}c(n+1,k)x^k=x(x+1)\cdots (x+n),\]
and then evaluating $F_{n+1}'(1)+F_{n+1}'(-1)$.

The present
 paper provides further evidence that the cycles of the graph $G(p)$
have various enumerative properties that are similar to the enumerative
properties of the graph $\Gamma(p)$, that is, the cycles of the permutation
$p$. This raises the question as to which well-known properties of the
Stirling numbers, such as unimodality, log-concavity, real zeros property,
 hold for the Hultman numbers as well. (See for instance Chapter 8 of
\cite{intro} for definitions and basic information on these properties.) 
 A simple  
 modification is necessary since  $\Sh(n,k)=0$ if 
$n$ and $k$ are of the same parity.
So let
\[Q_n(q)= \left\{ \begin{array}{l@{\ }l}\sum_{p\in TC_n}q^{c(\Gamma(p))/2}
\hbox{ if $n$ is even}\\
\sum_{p\in TC_n}q^{(c(\Gamma(p))+1)/2} \hbox{ if $n$ is odd.}\end{array}
\right. \]

While the coefficients of $P_n(q)$ are {\em all} the Hultman numbers
$\Sh(n-1,1)$, \linebreak $\Sh(n-1,2),\cdots, \Sh(n-1,n-1)$,
 the coefficients of $Q_n(q)$ are the
{\em nonzero} Hultman numbers $\Sh(n-1,k)$. 

Clearly, $Q_n(q)=P_n(q^2)$ if $n$ is even, and $Q_n(q)=qP_n(q^2)$
if $n$ is odd. However, Exercise 69(b) of \cite{stanleyex} shows that
all roots of $P_n(q)$ have real part 0. Hence the roots of $Q_n(q)$ are
all real and non-positive, from which the log-concavity and unimodality
of the coefficients of $Q_n(q)$ follows.  
This raises the question of whether there is a combinatorial proof
for the latter properties, possibly along the lines of the work of 
Bruce Sagan
 \cite{Sagan88} for the Stirling numbers of both kinds. 
Perhaps it is useful to note that
(\ref{stirling})
and Theorem \ref{doignon} imply that 
\[\Sh(n,k)=\left\{ \begin{array}{l@{\ }l}
c(n+2,k)/\binom{n+2}{2} \hbox{ if $n-k$ is odd,} \\
0 \hbox{ if $n-k$ is even.}\end{array}\right.
\]

Finally, to generalize in another direction, we point out that it is
very well-known (see, for example, Chapter 4 of \cite{intro}),
 that if we select a $n$-permutation $p$ at random, and 
$i$ and $j$ are two fixed, distinct positive integers at most as large as
$n$, then  the probability that $p$ contains $i$ and $j$ in the same cycle 
is $1/2$. Theorem \ref{stanley} shows that if $n$ is odd, then the set
$TC_n$ behaves just like the set $S_n$ of all permutations in this aspect.
This raises the question whether there are other naturally defined subsets
of $n$-permutations in which this phenomenon occurs.  

\vskip 2 cm
\centerline{{\bf Acknowledgment}}
We are indebted to Richard Stanley for helpful discussions on various
aspects of Exercises 69(a-c) of \cite{stanleyex}, 
and, most of all, for proving Theorem \ref{stanley}. We are also
thankful to Anthony Labarre and Axel Hultman for bringing earlier results
to our attention.

\end{document}